\documentclass[12pt]{amsart}

\usepackage{amssymb,url,mathrsfs,euscript}



\hyphenation{tri-ho-lo-mor-phic co-ho-mo-ge-ne-i-ty di-men-sion-al ho-lo-no-my}

\oddsidemargin0pt\evensidemargin0pt\textwidth450pt\textheight620pt
\footskip60pt\headsep40pt
\setlength{\parindent}{15pt}
\setlength{\parskip}{0pt}
\setlength{\baselineskip}{20pt}


\newtheorem{teo}{Theorem}[section]
\newtheorem{lema}{Lemma}[section]
\newtheorem{pro}{Proposition}[section]

\newtheorem{rmk}{Remark}[section]\newtheorem*{rmk*}{Remark}

\newtheorem{coro}{Corollary}[section]

\theoremstyle{remark}     
\newtheorem*{ex*}{Example}
\newtheorem*{exs*}{Examples}

\def\sideremark#1{\ifvmode\leavevmode\fi\vadjust{\vbox to0pt{\vss
\hbox to 0pt{\hskip\hsize\hskip1em%
\vbox{\hsize2cm\tiny\raggedright\pretolerance10000%
\noindent #1\hfill}\hss}\vbox to8pt{\vfil}\vss}}}%

\swapnumbers              
\theoremstyle{plain}      
\newtheorem{lemma}[rmk]{Lemma}

\newtheorem{theorem}[rmk]{Theorem}

\theoremstyle{definition} 

\newcommand{\bt}{\begin{theorem}}\newcommand{\et}{\end{theorem}}
\newcommand{\bl}{\begin{lemma}}\newcommand{\el}{\end{lemma}}
\newcommand{\bp}{\begin{proof}}\newcommand{\ep}{\end{proof}}

\newcommand{\be}{\begin{equation}}\newcommand{\ee}{\end{equation}}
\newcommand{\bdm}{\begin{displaymath}}
\newcommand{\edm}{\end{displaymath}}

\numberwithin{equation}{section}


\def \V{{V}}

\def \V{\mathcal{V}}


%

%

\renewcommand{\o}{\omega}

%
\renewcommand{\leq}{\leqslant}
%

%

%

%

%


%

%

\DeclareMathOperator{\s}{\textsl{s}}

\newcommand{\grad}{\textsl{grad\,}}

\newcommand{\tr}{\textsl{tr}\,}

\newcommand{\bb}{\mathbb}
\newcommand{\ba}{\begin{array}}\newcommand{\ea}{\end{array}}

\renewcommand{\&}{{\footnotesize \&}}

%

\begin{document}

\title[]{On the uniqueness of almost-K\"ahler structures}
\subjclass[2000]{53B20, 53C25}
\keywords{Orthogonal almost-K\"ahler structure, K\"ahler-Einstein metric}
\thanks{Partially supported by {\sc Gnsaga} of INdAM, {\sc Prin} \oldstylenums{2007} of {\sc Miur} (Italy), and the Royal Society of New Zealand, Marsden grant no. 06-UOA-029}
\date{\today}
\author[A.J. di Scala]{Antonio J. di Scala}
\address[AJS]{Dipartimento di Matematica, Politecnico di Torino, Corso Duca degli Abruzzi 24, I-10129, Torino}
\email{antonio.discala@polito.it}
\author[P.-A. Nagy]{Paul-Andi Nagy} 
\address[PAN]{Department of Mathematics, University of Auckland, 
38 Princes St., Auckland, New Zealand}
\email{nagy@math.auckland.ac.nz}

\frenchspacing

\begin{abstract}
We show uniqueness up to sign of positive, orthogonal almost-K\"ahler structures on any non-scalar flat K\"ahler-Einstein surface. 
\end{abstract}

\date{\today}
\maketitle

\section{Introduction}
Given a Riemannian four manifold $(M,g)$ an orthogonal almost-K\"ahler structure is an orthogonal almost complex structure $J$ such that $g(J \cdot, \cdot)$ is a closed two form. It seems to be quite difficult to determine, in general, the obstructions 
on the metric $g$ to the existence of such a structure. While integrability results are available for Einstein, anti-self-dual metrics \cite{arm}, familiar examples such as the Kodaira-Thurston manifold support (see e.g. \cite{surv})
a full circle of almost-K\"ahler structures on the same orientation. 

In this note we explore the uniqueness question of almost K\"ahler structures orthogonal with respect to a K\"ahler metric and compatible with the positive orientation.  
\begin{teo} \label{main}
Let $(M,g,J)$ be a connected K\"ahler-Einstein surface, positively oriented by $J$. Then any positive, orthogonal almost K\"ahler structure $J^{\prime}$ is integrable; in particular 
$J^{\prime}=\pm J$ when $g$ has non-vanishing scalar curvature.
\end{teo}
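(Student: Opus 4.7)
The plan is to decompose $\omega'$ using the parallel splitting of $\Lambda^+$ afforded by the K\"ahler structure, feed the resulting pieces into the Weitzenb\"ock formula on harmonic self-dual two-forms, and extract a single Bochner identity which settles the cases $s \geq 0$ at once; the case $s<0$ will require a finer analysis involving the canonical bundle $K_M$.

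Since $(g,J)$ is K\"ahler, the K\"ahler form $\omega = g(J\cdot,\cdot)$ is a parallel, nowhere-vanishing section of $\Lambda^+$, and induces an orthogonal parallel decomposition $\Lambda^+ = \mathbb{R}\omega \oplus \rcomp{\Lambda^{2,0}_J}$. I would write
\[
\omega' = f\omega + \psi, \qquad f \in C^\infty(M),\ \psi \in \Gamma(\rcomp{\Lambda^{2,0}_J}),
\]
so that the pointwise normalization $|\omega'|^2 = |\omega|^2 = 2$ gives $|\psi|^2 = 2(1-f^2)$. A closed self-dual two-form on a 4-manifold is harmonic, and the Weitzenb\"ock formula $\Delta = \nabla^*\nabla - 2W^+ + \tfrac{s}{3}$ on $\Lambda^+$, combined with the fact that a K\"ahler-Einstein metric forces $W^+$ to act as $s/6$ on $\mathbb{R}\omega$ and $-s/12$ on $\rcomp{\Lambda^{2,0}_J}$, reduces $\Delta\omega' = 0$ (after projecting onto the two parallel summands and using $\nabla\omega=0$) to the two equations
\[
\nabla^*\nabla f = 0, \qquad \nabla^*\nabla\psi + \tfrac{s}{2}\psi = 0.
\]
A Bochner computation on $|\psi|^2$ combined with these equations and the algebraic relation $|\psi|^2 = 2(1-f^2)$ yields the pointwise identity
\[
4|\nabla f|^2 + 2|\nabla\psi|^2 + s|\psi|^2 = 0. \qquad (\star)
\]

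For $s\geq 0$ the conclusion is immediate. When $s>0$, each of the three nonnegative summands in $(\star)$ must vanish; in particular $\psi\equiv 0$ and $f$ is constant, so the normalization $f^2+|\psi|^2/2 = 1$ forces $f=\pm 1$ and $J'=\pm J$. When $s=0$, $(\star)$ gives $\nabla f=0$ and $\nabla\psi = 0$, so $\omega'$ is parallel and $J'$ is K\"ahler, in particular integrable.

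The hard part is the case $s<0$, where the sign of the curvature term in $(\star)$ is unfavorable and no purely pointwise argument is available. Here I would exploit $d\omega'=0$ more finely: its $(2,1)$-type component reads $\bdel\phi = -2\,\partial f\wedge\omega$, where $\phi\in\Gamma(K_M)$ is the complex lift of $\psi$ (using the identification of $\rcomp{\Lambda^{2,0}_J}$ with the real part of $K_M$). The strong maximum principle for the bounded harmonic function $f$ already shows that the set $\{|f|=1\}$ is either empty or all of $M$, so in the nontrivial case $\phi$ has no zeros. Coupling the above relation with the K\"ahler-Einstein identity $F_{K_M} = \tfrac{s}{4}\omega$ for the Chern curvature of the canonical bundle, I expect to obtain a sign-definite elliptic inequality for $\log|\phi|^2$ incompatible with $|\phi|^2$ being a bounded positive function on the connected manifold, thereby forcing $\phi\equiv 0$ and $J'=\pm J$.
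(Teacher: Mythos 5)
Your treatment of the cases $\s\geq 0$ is correct and, up to repackaging, coincides with what the paper establishes: your identity $(\star)$, rewritten with $f=\cos\theta$ and $\vert\psi\vert^2=2\sin^2\theta$, is exactly the paper's Lemma 2.2\,(iii), $\vert\grad\theta\vert^2=-\s\,\frac{\sin^2\theta}{4}$ (you reach it through the Weitzenb\"ock formula on $\Lambda^{+}$, the paper through the conformal scalar curvature relation $\frac{\kappa-\s}{3}=\frac14\vert\nabla J'\vert^2$ -- these are the same computation). For $\s>0$ this kills $\psi$ pointwise, and for $\s=0$ it makes $\o_{J'}$ parallel; both conclusions are fine.

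The genuine gap is the case $\s<0$, which is the entire content of the theorem as it is actually used (Corollary 1.2 concerns negative Einstein constants), and which you leave as an expectation rather than a proof. Beyond being a sketch, the proposed strategy is structurally unworkable here: the theorem carries no compactness or completeness hypothesis -- indeed the corollary applies it to \emph{open pieces} of $\bb{CH}^1\times\bb{CH}^1$ and $\bb{CH}^2$ -- so an argument ending with ``a sign-definite elliptic inequality for $\log\vert\phi\vert^2$ is incompatible with $\vert\phi\vert^2$ being bounded and positive'' cannot close: bounded positive functions satisfying one-sided elliptic inequalities exist in abundance on arbitrary open sets, and every Liouville-type statement of this kind needs completeness or compactness. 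The paper's resolution is instead purely local and pointwise: the identity $\vert\grad\theta\vert=\sin\theta$ (after normalizing $\s=-4$) makes the angle function isoparametric, so $\xi=\grad\theta/\vert\grad\theta\vert$ is a geodesic unit normal to the level hypersurfaces of $\theta$; harmonicity of $x=\cos\theta$ gives $\tr S=-2\cos\theta$ for the shape operator $S$, the Bochner formula applied to $dx$ gives $\tr S^2=2\cos^2\theta-1$, and the structure equation $d\xi^1=-\o_J$ forces $S(J'\xi)=-(\cos\theta)J'\xi$. Subtracting the eigenvalue on $J'\xi$ leaves $\tr_{\V_0}S^2=\cos^2\theta-1\leq 0$ on the two-dimensional complement $\V_0$, which for a symmetric endomorphism forces $\sin\theta\equiv 0$ and contradicts the nonemptiness of $D=\{\Psi\neq 0\}$. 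You would need to supply an argument of this local character (or restrict the theorem to complete manifolds, which would no longer yield the corollary) to complete the proof.
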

The proof of Theorem \ref{main} is build around the observation that the angle function between the structures $J$ and $J^{\prime}$ is isoparametric in the sense of Cartan. 
The conclusion follows by investigation of the eigenvalue structure of the second fundamental form of its level sets. 

In particular we obtain the following, that answers a question posed in \cite{surv}.
\begin{coro} \label{sym}Any orthogonal almost-K\"ahler structure on (some open piece of) $\bb{C}\bb{H}^1 \times \bb{C}\bb{H}^1$ or $\bb{C}\bb{H}^2$ equipped with their canonical metrics is integrable. 
\end{coro}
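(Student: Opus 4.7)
The plan is to split into two cases according to whether the almost-K\"ahler structure $J'$ induces the same orientation as the canonical complex structure $J$. Both $\bb{C}\bb{H}^1\times\bb{C}\bb{H}^1$ and $\bb{C}\bb{H}^2$ with their canonical metrics $g$ are K\"ahler-Einstein with non-zero (negative) scalar curvature, with $J$ defining the positive orientation.

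In the positively oriented case I apply Theorem \ref{main} directly on each connected component of the open set on which $J'$ is defined; since the scalar curvature is non-zero, this immediately forces $J'=\pm J$, hence $J'$ is integrable.

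For the negatively oriented case on the product $\bb{C}\bb{H}^1\times\bb{C}\bb{H}^1=(\Sigma_1,J_1)\times(\Sigma_2,J_2)$, my plan is to observe that $J'':=J_1\oplus(-J_2)$ is an orthogonal parallel complex structure inducing the opposite orientation, so $(g,J'')$ is again K\"ahler-Einstein with non-zero scalar curvature. Then $J'$ is positively oriented relative to $J''$, and Theorem \ref{main} applied to $(g,J'')$ gives $J'=\pm J''$, again integrable.

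The main obstacle is the negatively oriented case on $\bb{C}\bb{H}^2$, since this manifold carries no K\"ahler structure compatible with the opposite orientation, so Theorem \ref{main} does not apply as stated. My approach here is to use that the canonical metric on $\bb{C}\bb{H}^2$ has vanishing Bochner tensor (it is a complex space form), so $W^-=0$ in the $J$-orientation; reversing orientation converts this into $W^+=0$, making the opposite-oriented $\bb{C}\bb{H}^2$ an Einstein anti-self-dual $4$-manifold. Since $J'$ is then positively oriented with respect to this reversed orientation, the integrability result for almost-K\"ahler structures compatible with Einstein anti-self-dual metrics \cite{arm} applies and yields the conclusion.
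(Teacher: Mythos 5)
Your proposal is correct and follows essentially the same route as the paper: both split according to orientation, apply Theorem \ref{main} (twice, via the second K\"ahler structure $J_1\oplus(-J_2)$) on $\bb{C}\bb{H}^1\times\bb{C}\bb{H}^1$, and handle the negatively oriented case on $\bb{C}\bb{H}^2$ by noting the canonical metric is self-dual, so that after reversing orientation the Einstein anti-self-dual integrability result of \cite{arm} applies. The paper's proof is just a more compressed version of the same argument.
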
 
\begin{proof}
Both spaces are equipped with K\"ahler Einstein metrics of negative scalar curvature. In the case of $\bb{CH}^2$, an orthogonal almost-K\"ahler structure is either compatible with the negative orientation, where integrability follows from 
\cite{arm} since the metric is self-dual, or with the positive orientation, where the same conclusion is obtained by Theorem \ref{main}. For $\bb{CH}^1 \times \bb{CH}^1$ which admits a K\"ahler structure on either orientation it 
suffices to apply Theorem \ref{main} twice.
\end{proof}
\section{Almost K\"ahler structures on K\"ahler surfaces}
Let $(M,g,J)$ be a K\"ahler surface, positively oriented by $J$ and admitting a positive orthogonal almost-K\"ahler structure $J^{\prime}$. Recall that the bundle $\Lambda^{+}M$ of self-dual forms on $M$ splits as
\begin{equation} \label{split1} 
\Lambda^{+}M=\langle \o_J \rangle \oplus \lambda^2_JM
\end{equation}
where $\lambda^2_JM=\{\alpha \in \Lambda^2M : \alpha (J \cdot ,J \cdot )=-\alpha\}$. The K\"ahler form $\o_{J^{\prime}}=g(J^{\prime} \cdot, \cdot)$ of $J^{\prime}$ is self-dual hence 
$$\o_{J^{\prime}}=x\o_J+\Psi$$
along \eqref{split1}, where $\o_J=g(J \cdot ,\cdot)$ is the K\"ahler form of $J$ and $\Psi$ belongs to $\lambda^2_JM$.The angle map $x:M \to \mathbb{R}$ is explicitly given by $x=-\frac{1}{4}\langle J^{\prime},J\rangle$.

Since $\o_{J^{\prime}}$ is a closed form in $\Lambda^{+}M$ it is in 
particular harmonic, thus $\Delta x=0$ because the splitting \eqref{split1} is parallel w.r.t the Levi-Civita connection of $g$ and hence preserved by the Laplace operator. At this stage it is straightforward to treat the compact case.
\begin{pro} \label{co}
If $M$ is compact then either $J^{\prime}=\pm J$ or the scalar curvature $\s$ of the metric $g$ vanishes.
\end{pro}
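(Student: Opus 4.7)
My approach is to combine the harmonicity of $x$ with a holomorphic/Bochner-type argument on the form $\Psi$. First, since $\Delta x=0$ on a closed manifold, $x$ must be a constant. The decomposition $\omega_{J'}=x\omega_J+\Psi$ is orthogonal and both almost-K\"ahler forms have the same norm, so $|\Psi|^2=(1-x^2)|\omega_J|^2$ is a non-negative constant. If $x=\pm 1$ then $\Psi=0$, forcing $\omega_{J'}=\pm\omega_J$ and hence $J'=\pm J$; I may therefore assume $|x|<1$, so that $\Psi$ is a nowhere-vanishing section of $\lambda^2_JM$.

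Since $x$ is constant and both $\omega_{J'}$ and $\omega_J$ are closed, $\Psi$ is closed as well. I would then identify $\lambda^2_JM$, endowed with its natural complex structure, with the canonical line bundle $K_M=\Lambda^{2,0}M$ and write $\Psi=\Phi+\overline{\Phi}$ for some $(2,0)$-form $\Phi$. On a complex surface $\partial\Phi$ is of type $(3,0)$ and therefore vanishes, so $d\Psi=0$ reduces to $\bdel\Phi=0$. In other words, $\Phi$ is a nowhere-vanishing holomorphic section of $K_M$ whose pointwise norm $|\Phi|^2=\tfrac{1}{2}|\Psi|^2$ is constant.

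The last step is the geometric heart of the argument: a holomorphic section of a Hermitian line bundle with constant norm is parallel for the Chern connection. Indeed, once $\nabla^{K_M}\Phi$ is split into its $(1,0)$- and $(0,1)$-parts, the $(0,1)$-part vanishes by holomorphicity, while $d|\Phi|^2=0$ combined with metric compatibility forces the $(1,0)$-part to vanish as well. As $\Phi$ has no zeros and is parallel, the Chern curvature of $K_M$ vanishes identically; on a K\"ahler manifold this curvature coincides, up to sign, with the Ricci form $\rho$, so $\rho\equiv 0$, and in particular $\s=0$.

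The step I expect to require the most care is not conceptual but bookkeeping: setting up the isomorphism between $\lambda^2_JM$ and the underlying real bundle of $K_M$ so that the Levi-Civita connection on $\lambda^2_JM$ matches the Chern connection on $K_M$, which is exactly what makes closedness of $\Psi$ equivalent to holomorphicity of $\Phi$. With that identification in hand, the rest of the argument is routine.
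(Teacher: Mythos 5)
Your proof is correct and follows the same route as the paper: compactness plus harmonicity of $x$ forces $x$ to be constant, and then either $\Psi\equiv 0$ (so $J^{\prime}=\pm J$) or $\Psi$ has constant non-zero norm. The paper simply cites the final implication (a closed, anti-invariant $\Psi$ of constant non-zero norm forces $\s=0$) as well known, whereas you supply its standard proof via the nowhere-vanishing holomorphic section $\Phi$ of the canonical bundle being parallel for the Chern connection; that argument is sound.
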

\begin{proof}
Since $M$ is compact, $x$ must be constant hence $\Psi$ either vanishes or it has constant, non-zero norm. As it is well known, the latter case forces $\s=0$.
\end{proof}
Let now $D$ be the open set where $\Psi$ is non zero, to be assumed not empty in what follows. After re-normalisation of $\Psi$ we can write $J^{\prime}=xJ+yI$ where $I$ is a local gauge for $J$ and $x^2+y^2=1$. That is $I$ is an orthogonal 
almost complex structure such that $IJ+JI=0$. We obtain a local gauge $I^{\prime}$ for $J^{\prime}$ given by 
\begin{equation} \label{gauge-new}
I^{\prime}=-yJ+xI, \ K^{\prime}=I^{\prime}J^{\prime}=K,
\end{equation}
where $K=IJ$. Let $\nabla$ denote the Levi-Civita connection of $g$.

Because $J$ is K\"ahler we have $\nabla I=b \otimes K$ for some local $1$-form $b$ on $M$ such that $db=\rho^J$, where $\rho^J=g(Ric \circ J \cdot, \cdot)$ is the Ricci form of $(g,J)$.
Here the Riemann curvature tensor is defined by $R(X,Y)=-\nabla^2_{X,Y}+\nabla^2_{Y,X}$ for all $X,Y$ in $TM$ and $Ric$ is the Ricci contraction. 

The local connection $1$-forms $a^{\prime}$ and $b^{\prime}$ of the gauge $I^{\prime}$ are determined from the almost K\"ahler condition on $(g,J^{\prime})$, that is $d\o_{J^{\prime}}=0$, by 
\begin{equation} \label{lcf}
\nabla J^{\prime}=a^{\prime} \otimes I^{\prime}-J^{\prime}a^{\prime} \otimes K^{\prime}, \ \nabla I^{\prime}=-a^{\prime} \otimes J^{\prime}+b^{\prime}\otimes K^{\prime}.
\end{equation}
The action of $J^{\prime}$ on $1$-forms on $M$ is defined by $J^{\prime}\alpha=\alpha(J^{\prime} \cdot)$.  
Without loss of generality we write $x=\cos \theta, y=\sin \theta$ for some local function 
$\theta$ on $M$.
\begin{lema} \label{prel}The following hold on $D$:
\begin{itemize}
\item[(i)] $a^{\prime}=d\theta$ and $J^{\prime}a^{\prime}=-\sin \theta b$;
\item[(ii)] $b^{\prime}=\cos \theta b$;
\item[(iii)] $\vert \grad \theta \vert^2=-\s \frac{\sin^2 \theta }{4}$.
\end{itemize}
\end{lema}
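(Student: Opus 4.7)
The plan is to prove (i) and (ii) by directly differentiating $J'=\cos\theta\cdot J+\sin\theta\cdot I$, and to extract (iii) from a Bochner argument for the section $\Psi=\sin\theta\cdot\omega_I=\omega_{J'}-x\omega_J$ of the parallel subbundle $\lambda^2_JM$.

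For (i) and (ii), use $\nabla J=0$ together with $\nabla I=b\otimes K$. Direct computation gives
\[
\nabla_X J'=X(\theta)\bigl(-\sin\theta\, J+\cos\theta\, I\bigr)+\sin\theta\,b(X)\,K=d\theta(X)\cdot I'+\sin\theta\,b(X)\cdot K'.
\]
Matching against the defining expansion $\nabla J'=a'\otimes I'-J'a'\otimes K'$ forces $a'=d\theta$ and $J'a'=-\sin\theta\cdot b$, which is (i). An analogous computation using $I'=-\sin\theta\, J+\cos\theta\, I$ yields $\nabla I'=-d\theta\otimes J'+\cos\theta\,b\otimes K'$, hence $b'=\cos\theta\cdot b$, which is (ii).

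For (iii), closedness and self-duality of $\omega_{J'}$ make it harmonic. The four-dimensional Weitzenb\"ock formula on $\Lambda^+M$,
\[
\Delta=\nabla^*\nabla-2W^++\tfrac{s}{3},
\]
applied with the standard K\"ahler eigenvalue structure $W^+\omega_J=\tfrac{s}{6}\omega_J$ and $W^+|_{\lambda^2_JM}=-\tfrac{s}{12}$, gives $\nabla^*\nabla\omega_{J'}=-\tfrac{s}{2}\Psi$. Since $x$ is harmonic and $\omega_J$ is parallel, $\nabla^*\nabla(x\omega_J)=(\Delta x)\omega_J=0$, so $\nabla^*\nabla\Psi=-\tfrac{s}{2}\Psi$. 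Pairing with $\Psi$ produces the Bochner identity
\[
\tfrac12\Delta|\Psi|^2+|\nabla\Psi|^2=-\tfrac{s}{2}|\Psi|^2.
\]

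It remains to evaluate each of the three terms. From (i), $J'd\theta=-\sin\theta\cdot b$ and the fact that $J'$ is an isometry on $1$-forms together give $|d\theta|^2=\sin^2\theta|b|^2$. The formula $\nabla\Psi=\cos\theta\,d\theta\otimes\omega_I+\sin\theta\,b\otimes\omega_K$, combined with the orthogonality and equal norms of $\omega_I,\omega_K\in\lambda^2_JM$, yields $|\nabla\Psi|^2=|\omega_I|^2(1+\cos^2\theta)|d\theta|^2$; the identity $\Delta x=0$ applied to $\Delta(f^2)=2f\Delta f-2|df|^2$ gives $\Delta|\Psi|^2=|\omega_I|^2\Delta(\sin^2\theta)=2|\omega_I|^2\sin^2\theta|d\theta|^2$. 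Substituting all three into the Bochner identity, the trigonometric factors collapse to $4|d\theta|^2=-s\sin^2\theta$, which is (iii). The main thing to watch is consistency of normalisations on 2-forms; once fixed, the algebra is mechanical.
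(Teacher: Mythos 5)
Your proof is correct. Parts (i) and (ii) are the same computation as the paper's: differentiate $J^{\prime}=\cos\theta\,J+\sin\theta\,I$ and $I^{\prime}=-\sin\theta\,J+\cos\theta\,I$ using $\nabla J=0$, $\nabla I=b\otimes K$, and compare with \eqref{lcf} (the paper routes the bookkeeping through the inversion formulae $J=xJ^{\prime}-yI^{\prime}$, $I=yJ^{\prime}+xI^{\prime}$, but that is the identical calculation). Part (iii) is where you genuinely diverge. The paper computes the conformal scalar curvature $\kappa=3\langle W^{+}\o_{J^{\prime}},\o_{J^{\prime}}\rangle=(x^2-\tfrac{y^2}{2})\s$ and then invokes the known almost-K\"ahler identity $\tfrac{\kappa-\s}{3}=\tfrac14\vert\nabla J^{\prime}\vert^2$ from \cite{surv}; you instead run the Weitzenb\"ock formula $\Delta=\nabla^{\star}\nabla-2W^{+}+\tfrac{\s}{3}$ on the harmonic form $\o_{J^{\prime}}$ to get $\nabla^{\star}\nabla\Psi=-\tfrac{\s}{2}\Psi$ and then do a Bochner pairing with $\Psi$. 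This is essentially an unpacking of the proof of the identity the paper cites, but carried out against $\Psi$ rather than $\o_{J^{\prime}}$: since $\vert\Psi\vert$ is not constant you must keep the $\tfrac12\Delta\vert\Psi\vert^2$ term, which you evaluate correctly via $\Delta x=0$, and the trigonometric cancellation $\sin^2\theta+(1+\cos^2\theta)=2$ lands exactly on $4\vert d\theta\vert^2=-\s\sin^2\theta$. What your route buys is self-containedness (no appeal to the cited relation between $\kappa$ and $\vert\nabla J^{\prime}\vert^2$) at the cost of a slightly longer computation and more care with Laplacian sign conventions; the paper's route is shorter but leans on the external reference. Both rest on the same two inputs: the eigenvalues of $W^{+}$ on a K\"ahler surface and the harmonicity of $\o_{J^{\prime}}$.
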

\begin{proof}
Let us record first the following inversion formulae
\begin{equation} \label{inv}
J=xJ^{\prime}-yI^{\prime}, \ I=yJ^{\prime}+xI^{\prime}.
\end{equation}
(i) We compute 
\begin{equation*}
\nabla J^{\prime}=dx \otimes J+yb \otimes K+dy \otimes I=(xdy-ydx) \otimes I^{\prime}+(yb) \otimes K^{\prime}
\end{equation*}
after using \eqref{inv}. Therefore $a^{\prime}=xdy-ydx=d\theta$ and $yb=-J^{\prime}a^{\prime}$ from the almost K\"ahler condition.\\ 
(ii) As in (i), we differentiate in \eqref{gauge-new} to find 
\begin{equation*}
\nabla I^{\prime}=-dy \otimes J+xb \otimes K+dx \otimes I=(ydx-xdy) \otimes J^{\prime}+xb \otimes K^{\prime}
\end{equation*}
after using \eqref{inv} and $x^2+y^2=1$. The claim is proved by comparison with \eqref{lcf}. \\
(iii) Because $(g,J)$ is K\"ahler the positive Weyl tensor is given by 
$$W^{+}=\biggl ( \begin{array}{lr}
\frac{s}{6} & 0\\
0 & -\frac{s}{12}\\
\end{array} \biggr ) $$
with respect to the decomposition \eqref{split1}. In particular, $W^{+}\o_{J^{\prime}}=\frac{\s}{6}(x\o_J-\frac{y}{2}\o_I)$, making that the conformal scalar curvature $\kappa=3\langle W^{+}\o_{J^{\prime}}, \o_{J^{\prime}}\rangle$ of $(g,J^{\prime})$ 
is given by $\kappa=(x^2-\frac{y^2}{2})\s$. The well known (see \cite{surv}) relation $\frac{\kappa-\s}{3}=\frac{1}{4} \vert \nabla J^{\prime} \vert^2$ combined with \eqref{lcf} and (i) yields the claim.
\end{proof}
In particular, Theorem \ref{main} folllows immediately when $\s>0$. Therefore, we assume from now on that the metric $g$ is Einstein, that is $\rho^J=\frac{\s}{4}\o_J$ and re-normalise the scalar curvature to $\s=-4$. We may assume, w.l.og. that 
$\sin \theta>0$ so that $\vert \grad \theta \vert=\sin \theta$ by Lemma \ref{prel}, (iii). The unit vector field $\xi=\frac{\grad \theta }{\vert \grad \theta \vert}$ is then totally geodesic henceforth normal 
to the co-dimension one Riemannian foliation induced by $\V=\ker d\theta$. 

Let $S$ in $S^2\V$ be given by $\langle SV,W \rangle=(\nabla_V \xi)W $ for all $V,W$ in $\V$. It describes the second fundamental form of the distribution $\V$. The unit vector field $\xi_1=J^{\prime}\xi$ in $\V$ has dual one form $\xi^1$ subject to 
\begin{equation} \label{str-1}
d \xi^1=-\o_J.
\end{equation}
since $\xi^1=b$ by (i) in Lemma \ref{prel}.
\begin{lema} \label{l3} The following hold:
\begin{itemize}
\item[(i)] $\tr S=-2\cos \theta$;
\item[(ii)] $\tr S^2=2 \cos^2 \theta-1$;
\item[(iii)] $S(\xi_1)=-(\cos \theta) \xi_1$.
\end{itemize}
\end{lema}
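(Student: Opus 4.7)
The plan is to exploit three pieces of structure already in place: the harmonicity of $x=\cos\theta$, the K\"ahler--Einstein identity $d\xi^1=-\o_J$ of \eqref{str-1}, and the Riccati equation for $S$ along the geodesic congruence generated by $\xi$. A running observation, useful throughout, is that $\grad\theta=\sin\theta\,\xi$ together with $\nabla_\xi\xi=0$ yields
\[
\mathrm{Hess}\,\theta(\xi,\xi)=\sin\theta\cos\theta,\qquad \mathrm{Hess}\,\theta(\xi,V)=0,\qquad \mathrm{Hess}\,\theta(V,W)=\sin\theta\,\langle SV,W\rangle
\]
for $V,W\in\V$.

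For (i), I would start from $\Delta x=0$. Expanding $0=\Delta(\cos\theta)$ by the chain rule and substituting $|\grad\theta|^2=\sin^2\theta$ from Lemma~\ref{prel}(iii) gives $\Delta\theta=-\sin\theta\cos\theta$. Comparing this with the Hessian decomposition $\Delta\theta=\mathrm{Hess}\,\theta(\xi,\xi)+\sin\theta\,\tr S=\sin\theta\cos\theta+\sin\theta\,\tr S$ immediately produces $\tr S=-2\cos\theta$.

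For (iii), I would evaluate the 2-form identity $d\xi^1=-\o_J$ on pairs $(V,\xi)$ with $V\in\V$. The right-hand side is $\langle V,J\xi\rangle$, and the inversion formula \eqref{inv} yields $J\xi=\cos\theta\,\xi_1-\sin\theta\,I'\xi$. The left-hand side, rewritten using $\langle\nabla_V\xi_1,\xi\rangle=-\langle\xi_1,\nabla_V\xi\rangle=-\langle S\xi_1,V\rangle$ by the symmetry of $S$, becomes $-\langle S\xi_1,V\rangle-\langle\nabla_\xi\xi_1,V\rangle$. The remaining term $\nabla_\xi\xi_1=(\nabla_\xi J')\xi$ is read off directly from \eqref{lcf}: since $a'(\xi)=d\theta(\xi)=\sin\theta$ and $b(\xi)=\xi^1(\xi)=0$, one obtains $\nabla_\xi\xi_1=\sin\theta\,I'\xi$. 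The $I'\xi$-contributions cancel, leaving $\langle S\xi_1,V\rangle=-\cos\theta\,\xi^1(V)$, hence $S\xi_1=-\cos\theta\,\xi_1$.

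Part (ii) is the subtler step, as $\tr S^2$ is a second-order quantity not visible at the level of $\nabla J'$. The cleanest approach is the Riccati equation along the geodesic flow of $\xi$: for any $V\in\V$, using $\nabla_\xi\xi=0$ and the fact that $\nabla_\xi V$ remains in $\V$ (because $\langle V,\xi\rangle\equiv 0$), one derives
\[
R(V,\xi)\xi=-(\nabla_\xi S)V-S^2V.
\]
Taking the $\V$-trace and invoking the Einstein condition $\Ric(\xi,\xi)=\s/4=-1$ gives $\tr S^2=1-\xi(\tr S)$; by (i), $\xi(\tr S)=2\sin\theta\cdot\xi(\theta)=2\sin^2\theta$, so $\tr S^2=2\cos^2\theta-1$. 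I expect this last piece to be the main hurdle: parts (i) and (iii) fall out of structural identities already in hand, whereas (ii) requires a genuine curvature identity, and it is here that the Einstein hypothesis is used in an essential way.
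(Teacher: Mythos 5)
Your proposal is correct. Parts (i) and (iii) are in substance the paper's own arguments: for (i) the paper also traces the harmonicity of $x$ against $TM=\R\xi\oplus\V$ (phrased as $\sum\nabla_{e_i}(dx)e_i+\nabla_{\xi}(dx)\xi=0$ with $dx=-\sin^2\theta\,\xi$, which is exactly your Hessian bookkeeping), and for (iii) the paper uses the same two ingredients, $d\xi^1=-\o_J$ and the connection forms \eqref{lcf}, the only difference being that it contracts the two-form against $(X,J^{\prime}\xi)$ to get $\nabla_{\xi_1}\xi_1=-(JJ^{\prime})\xi$ and then peels off $(\nabla_{\xi_1}J^{\prime})\xi$, while you contract against $(V,\xi)$ and use the symmetry of $S$. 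Part (ii) is where you genuinely diverge. The paper treats $dx$ as a harmonic $1$-form, invokes the Bochner formula with $\Ric=-\mathrm{id}$ to get $\nabla^{\star}\nabla(dx)=dx$, computes $\tfrac12\Delta|dx|^2$ directly from $|dx|=\sin^2\theta$ and harmonicity, and then extracts $\tr S^2$ from $|\nabla(dx)|^2=\sin^4\theta\,(\tr S^2+4\cos^2\theta)$. You instead trace the Riccati equation along the unit geodesic field $\xi$, i.e.\ the Raychaudhuri identity $\xi(\tr S)+\tr S^2+\Ric(\xi,\xi)=0$, and feed in part (i). The two routes are cousins (both amount to contracting the Einstein condition against $\xi\otimes\xi$, the Bochner argument being the Weitzenb\"ock-packaged version of the same curvature identity), but yours is shorter and makes the role of $\Ric(\xi,\xi)=-1$ completely transparent, at the price of depending on (i), which the paper's version of (ii) does not. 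One small caution: the paper's curvature convention is $R(X,Y)=-\nabla^2_{X,Y}+\nabla^2_{Y,X}$, opposite to the one implicit in your Riccati equation; since the signs of $R(V,\xi)\xi$ and of the Ricci contraction flip together the conclusion is unaffected, but you should state the convention you are using.
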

\begin{proof}
(i) We have $\sum \limits_{i=1}^3\nabla_{e_i}(dx)e_i+\nabla_{\xi}(dx)\xi=0$ for an arbitrary local orthonormal frame in $\V$ since the angle function is harmonic. Since 
\begin{equation} \label{comp}
dx=-(\sin^2 \theta) \xi
\end{equation}
we have $\langle \nabla_{\xi}(\sin^2 \theta \xi), \xi \rangle=2 \sin^2 \theta \cos \theta$, and its now easy to conclude. \\
(ii) Because $x$ is harmonic, we have $\Delta(dx)=0$ and hence $(\nabla^{\star}\nabla)dx=dx$ by the Bochner formula. Taking the scalar product with $dx$ we get further 
$$ \frac{1}{2}\Delta \vert dx \vert^2=\langle \nabla^{\star} \nabla(dx), dx \rangle-\vert \nabla dx \vert^2=\vert dx \vert^2-\vert \nabla(d x) \vert^2.
$$
By \eqref{comp} we have $\vert dx \vert=\sin^2 \theta=1-x^2$. Using again that $x$ is harmonic we obtain $\frac{1}{2}\Delta \vert dx \vert^2=2\vert dx \vert^2-6x^2 \vert dx \vert^2$ thus 
$\vert \nabla(d x) \vert^2=(6x^2-1) \vert dx \vert^2$. After expressing $dx$ in terms of $\xi$, which is of unit length, we get $\vert \nabla(d x) \vert^2=\sin^4\theta \vert \nabla \xi \vert^2+4 \sin^4 \theta \cos^2 \theta $ and the claim follows.\\ 
(iii) By \eqref{str-1} we have $d\xi^1=-\o_{J}$ hence $\langle \nabla_{X}J^{\prime}\xi,Y\rangle-\langle \nabla_Y(J^{\prime}\xi),X\rangle=-\langle JX,Y\rangle$ for all $X,Y$ in $TM$. Taking 
$Y=J^{\prime}\xi$ we get $\nabla_{J^{\prime}\xi}(J^{\prime}\xi)=-(JJ^{\prime})\xi$.  Therefore 
$$J^{\prime} \nabla_{\xi_1}\xi=-(\nabla_{\xi_1}J^{\prime})\xi-(JJ^{\prime})\xi. $$ However, 
by means of \eqref{gauge-new} and the definition of $J^{\prime}$ we get $(\nabla_{\xi_1}J^{\prime})\xi=(\sin \theta) K\xi$ while \eqref{inv} yields $(J J^{\prime})\xi=-(\cos \theta )\xi-(\sin \theta) K \xi$ and the claim follows.
\end{proof}

$\\$
{\bf{Proof of Theorem \ref{main}:}}\\
On the orthogonal complement $\V_0$ of $\xi_1$ in $\V$ we must have $\tr_{\V_0}S^2=\cos^2 \theta-1 \leq 0$ by (ii) and (iii) in Lemma \ref{l3}. Since $S$ is symmetric and preserves $\V_0$ it follows that 
$\cos^2 \theta=0$ which is contradictory to having $D$ non-empty.   

\end{document}